\let\old@ps@headings\ps@headings
\let\old@ps@IEEEtitlepagestyle\ps@IEEEtitlepagestyle
\def\psccfooter#1{%
    \def\ps@headings{%
        \old@ps@headings%
        \def\@oddfoot{\strut\hfill#1\hfill\strut}%
        \def\@evenfoot{\strut\hfill#1\hfill\strut}%
    }%
    \def\ps@IEEEtitlepagestyle{%
        \old@ps@IEEEtitlepagestyle%
        \def\@oddfoot{\strut\hfill#1\hfill\strut}%
        \def\@evenfoot{\strut\hfill#1\hfill\strut}%
    }%
    \ps@headings%
}
\newtheorem{proposition}{Proposition}
\newcommand{\grad}[2]{\nabla_{#2}{#1}}
\newcommand{\hhess}[2]{\nabla^2_{#2}{#1}}
\newcommand{\refsec}[1]{Section~\ref{#1}}
\newcommand{\diag}[1]{\lfloor \!\!\lfloor #1 \rfloor\!\!\rfloor}
\definecolor{mygreen}{RGB}{80,200,120}
\begin{document}
%
\title{A Feasible Reduced Space Method for Real-Time Optimal Power Flow}

\author{
\IEEEauthorblockN{
 François Pacaud, Daniel Adrian Maldonado, Sungho Shin, Michel Schanen, Mihai Anitescu
}
\IEEEauthorblockA{
 Mathematics and Computer Science Department \\
 Argonne National Laboratory\\  Lemont, U.S.A \\
\{fpacaud,maldonadod,sshin,mschanen,anitescu\}@anl.gov}
}

\maketitle

\begin{abstract}
  We propose a novel feasible-path algorithm to solve the optimal
  power flow (OPF) problem for real-time use cases. The method augments the seminal work
  of Dommel and Tinney with second-order derivatives to work directly
  in the reduced space induced by the power flow equations.
  In the reduced space, the optimization problem includes only
  inequality constraints corresponding to the operational constraints.
  While the reduced formulation directly enforces the physical constraints,
  the operational constraints are softly enforced through Augmented Lagrangian penalty
  terms. In contrast to interior-point algorithms (state-of-the art for
  solving OPF), our algorithm maintains feasibility at each iteration, which
  makes it suitable for real-time application.
  By exploiting
  accelerator hardware (Graphic Processing Units) to compute the reduced Hessian, we show that the second-order method is numerically tractable and is effective to solve both static and real-time OPF problems.
\end{abstract}

\begin{IEEEkeywords}
  OPF, Reduced-Space, Feasible method
\end{IEEEkeywords}

\section{Introduction}
With the increasing penetration of rapidly varying renewable generation resources and electrical vehicles, there is a growing need to compute the generation dispatch at much higher time frequency.
This requires adaptation of optimal power flow (OPF) algorithms for operation in a real-time setting.
Here, finding the optimum becomes secondary comparing to finding a feasible solution
within a tight time constraint~\cite{Capitanescu2016}.
In addition, state-of-the-art OPF algorithms are mature tools on a slow timescale,
but they are not adapted to operate in a real-time setting. Generally, the
intermediate iterates do not satisfy the power flow equations --- encoding
the physical constraints (PCs) of the network --- and the solution is not realizable
before the algorithm has converged.
Novel real-time OPF approaches try to remedy this issue and track
closely the network changes, at a faster timescale~\cite{tang2017real,gan2016online,hauswirth2017online,dall2016optimal}.

The reduced space algorithm of Dommel and Tinney~\cite{dommel_optimal_1968}
is a promising candidate for these real-time applications. By design, it works directly
in the manifold induced by the power flow equations, so all iterates inherently satisfy
the PCs.The operational constraints (OCs) are enforced with soft penalties,
commonly used in real-time optimization to avoid expensive active-set reordering operations ~\cite{tang2017real,hauswirth2017online,zavala2010real,chiang2017augmented}.
However, the algorithm has fallen out of favor in the 1980s,
with the advent of interior point algorithms (see~\cite{frank_optimal_2012}
for a survey). This is due to practical limitations of the algorithm:
(1) the iterates are updated in the reduced-space using
only first-order information, impairing the speed of convergence of the algorithm.
(2) the large number of OCs in the problem
limits the available degrees of freedom in the reduced space.
We refer to the recent report of Kardos et al.~\cite{kardos2020reduced} for further details
on the limitations of the algorithm.

This article revisits the reduced space algorithm~\cite{dommel_optimal_1968}
in a real-time optimization setting while addressing (1) and (2).
(1) We efficiently extract the reduced Hessian in the reduced space, by leveraging GPUs
with automatic differentiation (AD) and by parallelizing the code.
The GPUs enable fast computation of second-order information to compute the descent direction in the reduced space.
(2) We propose an augmented Lagrangian (AL) formulation to reformulate the OCs 
with smooth penalty terms (PCs being satisfied by design).
First, this allows to devise a practical AL algorithm
to solve the \emph{static OPF} in the reduced space, with a tractable running time.
Second, we can exploit the AL formulation to track
the solution found previously by the static OPF algorithm, in a real-time setting.
Our \emph{real-time OPF} algorithm explicitly exploits the reduced Hessian to update
the tracking control. Previous real-time algorithms~\cite{tang2017real,hauswirth2017online}
only rely on first-order information (with a projected gradient or a LBFGS algorithm).
Numerical results show that the method efficiently tracks a suboptimal solution on instances with up to 9241 buses.

In \refsec{sec:model}, we introduce
the OPF problem before presenting in \refsec{sec:projection}
the reduced space problem with the expressions
of the gradient and the Hessian in the reduced space.
In \refsec{sec:algorithm}, we develop two
algorithms based on an AL formulation to
solve static and real-time OPF problems.
Finally, we show in Section~\ref{sec:results} that the two algorithms are numerically tractable by solving large-scale OPF instances.

\subsection{Notation}

All vectors
are noted in bold $\bm{x} = (x_1, \cdots, x_n) \in \mathbb{R}^n$.
$\|\cdot\|$ and $\diag{\cdot}:\mathbb{R}^n \to \mathbb{R}^{n \times n}$ denote the Euclidean norm and the diagonal operator, respectively.
For any variable $x$, its lower bound is denoted by $x^\flat$, its upper
bound by $x^\sharp$.
Finally, $d$ denotes the differential operator, $d_x(\cdot)$ the total derivative $\frac{d(\cdot)}{dx}$,
and $\partial_x(\cdot)$ the partial derivative $\frac{\partial (\cdot)}{\partial x}$.
Consistent with the Jacobian's definition,
the gradient is defined as the \emph{row} vector $\nabla_x(\cdot) =
\big(\partial_{x_1} (\cdot), \cdots, \partial_{x_n}(\cdot)\big)\in \mathbb{R}^{1 \times n}$,
and is thus assimilated into the total derivative.

\section{Model and formulation}
\label{sec:model}
This section introduces an alternate current optimal power flow model (AC-OPF) associated
with a transmission grid.

\subsection{Optimal power flow}
We use the most widely used polar formulation \cite{cain2012history} as our model.
Let a power grid with $n_b$ buses, $n_g$ generators and $n_\ell$
lines. We denote by $\bm{v}, \bm{\theta} \in \mathbb{R}^{n_b}$ the voltage magnitudes
and the voltage angles at each bus. The active and reactive power generations
are denoted respectively by $\bm{p}^g, \bm{q}^g \in \mathbb{R}^{n_g}$,
and the active and reactive loads by $\bm{p}^d, \bm{q}^d$.

At fixed loads $(\bm{p}^d, \bm{q}^d)$,
the optimal power flow aims at finding an operational point
$(\bm{v}, \bm{\theta}, \bm{p}^g, \bm{q}^g)$
that minimizes the active power generation cost while satisfying the
physical and the operational constraints of the network.
We get the ACOPF problem written in polar form:
\begin{subequations}
  \label{eq:acopf}
  \begin{align}
      \label{eq:acopf:obj}
      \min_{\bm{v}, \bm{\theta}, \bm{p}, \bm{g}} & \; \sum_{i=1}^{n_g} c^g_{i,1} (p_i)^2 + c_{i,2}^g (p_i) + c_{i, 3}^g \\
      \label{eq:acopf:cons}
      \text{subject to } ~ & G(\bm{v}, \bm{\theta}, \bm p, \bm q) = 0 \, \quad  H(\bm v, \bm \theta) \leq \bm{h}^\sharp \, , \\
      \label{eq:acopf:bounds1}
                           & \bm{v}^\flat \leq \bm{v} \leq \bm{v}^\sharp\, , \quad \theta_{ref} = 0 \; , \\
      \label{eq:acopf:bounds2}
                    & \bm{p}^\flat \leq \bm{p} \leq \bm{p}^\sharp \; , ~ \bm{q}^\flat \leq \bm{q} \leq \bm{q}^\sharp \; .
  \end{align}
\end{subequations}
The quadratic objective~\eqref{eq:acopf:obj} depends only on the active power generation,
with coefficients $c^g_1, c^g_2, c^g_3 \in \mathbb{R}^{n_g}$ specifying the cost of each generator.
In~\eqref{eq:acopf:cons}, the equality constraint $G$ encodes the $2\times n_b$ complete power balance equations
and the inequality constraint $H$ encodes the $2 \times n_\ell$ line flow limits.
The bounds~\eqref{eq:acopf:bounds1}-\eqref{eq:acopf:bounds2}
ensure that the voltage magnitudes and the power generations satisfy their operational limits.

\subsection{State and control variables}
In the power flow, the buses are classified into three categories: REF (or slack bus),
PV (or generator buses), and PQ (or load buses).
We define the \emph{control} variable $\bm{u}$ by gathering
the voltage magnitudes at the PV buses and at the slack, as well as the active power generation at PV buses.
Similarly, we define a \emph{state} variable $\bm x$ with the voltage magnitudes at PQ buses
and the voltage angles at PQ and PV buses. We get:
\begin{equation}
  \label{eq:statecontrol}
  \bm{u} = (\bm v_{ref}, \bm{v}_{pv}, \bm{p}_{pv}^g) \; , \quad
  \bm{x} = (\bm{\theta}_{pv}, \bm{\theta}_{pq}, \bm{v}_{pq}) \; .
\end{equation}
If the control $\bm u$ is fixed, the state $\bm x$
is entirely determined by a subset of the power flow equations $G$,
denoted here by the function $g:\mathbb{R}^{n_x} \times \mathbb{R}^{n_u} \to \mathbb{R}^{n_x}$~\cite{tinney1967power}.

Hence, we can derive a new OPF formulation depending only
on the state $\bm x$ and the control $\bm u$. We define the functions
$h(\bm x, \bm u) = H(\bm v, \bm \theta)$ for line constraints,
and $r(\bm x, \bm u) = \big(\bm{v}_{pq}, \bm{p}_{ref}^g, \bm{q}^{net}_{ref}, \bm{q}^{net}_{pv}\big)
\in \mathbb{R}^{n_{pq}  + 2 n_{ref}+ n_{pv}}$ to gather the remaining operational constraints
(voltage angles are unconstrained), yielding the problem
\begin{equation}
  \label{eq:acopf_controlstate}
  \begin{aligned}
    \min_{\bm x, \bm u} \quad & f(\bm x, \bm u)                                            \\
    \text{subject to} \quad   & \bm{u}^\flat \leq \bm{u} \leq \bm{u}^\sharp  \;, \quad g(\bm x, \bm u) = 0 \; , \\
                              &\bm{r}^\flat \leq r(\bm x, \bm u) \leq \bm{r}^\sharp \; , \quad
                              h(\bm x, \bm u) \leq \bm{h}^\sharp \; , \\
  \end{aligned}
\end{equation}
with $\bm{r}^\flat = (\bm{v}_{pq}^\flat, \bm{p}_{ref}^{g, \flat}, \bm{q}_{ref}^{g, \flat}, C_g^{pv} \bm{q}_{pv}^{g, \flat})$
and $\bm{r}^\sharp = (\bm{v}_{pq}^\sharp, \bm{p}_{ref}^{g, \sharp}, \bm{q}_{ref}^{g, \sharp}, C_g^{pv} \bm{q}_{pv}^{g, \sharp})$
(by convention the slack bus has only one generator). $C_g \in \mathbb{R}^{n_b \times n_g}$ is the bus-generator incidence matrix. In \eqref{eq:acopf_controlstate}, we
have a total of $m = 2n_\ell + n_{pq} + n_{pv} + 2$ nonlinear constraints.

Note that problem~\eqref{eq:acopf_controlstate} is not equivalent to
the original OPF~\eqref{eq:acopf}, as we are not controlling explicitly the reactive power generations
(we only bound the net power injections in the functional $r(\bm x, \bm u)$). That means that
if we have multiple generators associated with the same bus, we cannot recover
the individual reactive power generations for each generator from a couple $(\bm x, \bm u)$. However,
when compared to the structure of~\eqref{eq:acopf}, \eqref{eq:acopf_controlstate}
allows us to significantly reduce the
dimension of the problem. We will explain in the next section the basis of the reduced space
method.

\section{Reduced space problem}
\label{sec:projection}
Section~\ref{sec:model} introduced the OPF problem, parameterized by a state $\bm x$ and a control $\bm u$.
In \refsec{sec:projection:reduced_space}, we exploit the implicit relation between the control $\bm u$ and the state $\bm x$ to build a reduced space problem, depending
\emph{only} on the control $\bm{u}$. Then, we derive in \refsec{sec:projection:reduced_gradient} the reduced gradient
and the reduced Hessian using the adjoint and the adjoint-adjoint methods, respectively.

\subsection{Reduced space}
\label{sec:projection:reduced_space}
The functional $g$ encoding the $n_x$ power flow equations
is continuously differentiable.
By the Implicit Function
theorem, it follows that if the Jacobian $\nabla_{x} g(\cdot, \bm{u})$ is invertible
at a given control $\bm u$, then there exists
a local function $x: \mathbb{R}^{n_u} \to \mathbb{R}^{n_{x}}$ such that
$g(x(\bm{u}), \bm{u}) = 0$ locally.

\paragraph{Resolution of power flow equations}
At a fixed control~$\bm u$, the nonlinear equations
$g(\bm{x}, \bm{u}) = 0$ are solved using a Newton-Raphson
algorithm~\cite{tinney1967power}. Starting at an initial guess $\bm{x}^{(0)}$, the
algorithm computes the solution $x^*(\bm u)$ through
\begin{equation}
  \label{eq:projection:newtonraphson}
  \bm{x}_{k+1} = \bm{x}_{k} -
  (\nabla_{x} g_k)^{-1} \bm{g}_k
  \; .
\end{equation}

\paragraph{Reduced space OPF}
Applying the Implicit Function theorem, we eliminate all equality constraints in \eqref{eq:acopf_controlstate}
and we get the reduced space problem with remaining control variables $\bm{u}$:
\begin{equation}
  \label{eq:reduced_space_problem}
  \begin{aligned}
    \min_{\bm{u}^\flat \leq \bm u \leq \bm{u}^\sharp} \quad & f(x(\bm u), \bm u) \\
    \text{subject to} \quad  & h(x(\bm u), \bm u) \leq \bm{h}^\sharp \; , ~
    \bm{r}^\flat \leq r(x(\bm{u}), \bm{u}) \leq \bm{r}^\sharp \; .
  \end{aligned}
\end{equation}
Problem~\eqref{eq:reduced_space_problem} is equivalent to \eqref{eq:acopf_controlstate},
but has a smaller dimension ($n_u$ instead of $n_u + n_x$).
This implies solving the power flow equations
for all trial points $\bm u$, and requires expensive (but highly parallelizable)
computations for the reduced gradient and the reduced Hessian (see \refsec{sec:projection:reduced_gradient}).

\subsection{Reduced sensitivities}
\label{sec:projection:reduced_gradient}

\subsubsection{Reduced gradient}
If we use the chain rule to differentiate the objective in \eqref{eq:reduced_space_problem},
we get $\nabla_{\bm u} f = \partial_{\bm u} f + \partial_{\bm x} f \cdot \nabla_{\bm u} x$.
However, this expression is expensive to evaluate,
as the Jacobian $\nabla_{\bm u} x$ (also called \emph{sensitivity matrix})
is a dense matrix with dimension $n_x \times n_u$.
Instead, we evaluate the reduced gradient with the \emph{adjoint method}.
\begin{proposition}[Adjoint method]
  \label{prop:adjoint}
  Let $(\bm x, \bm{u}) \in \mathbb{R}^{n_x} \times \mathbb{R}^{n_x}$ such that $g(\bm{x}, \bm{u}) = 0$,
  and a first-order adjoint $\bm{\lambda} \in \mathbb{R}^{n_x}$ solution of
  the linear system $(\nabla_{\bm x} g)^\top \bm{\lambda} = - (\partial_{\bm x} f)^\top$.
  Then, the reduced gradient satisfies
  \begin{equation}
    \label{eq:reduced_gradient}
    \nabla_{\bm{u}} f =
    \partial_{\bm{u}} f +
    \bm{\lambda}^\top \cdot \nabla_{\bm{u}} g
    \; .
  \end{equation}
\end{proposition}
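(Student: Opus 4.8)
The plan is to start from the naive chain-rule expression for the reduced gradient and then eliminate the expensive sensitivity matrix $\nabla_{\bm u} x$ using the constraint, introducing the adjoint variable as a convenient shorthand for a product of factors. First I would differentiate $\bm u \mapsto f(x(\bm u), \bm u)$ by the chain rule to get
\begin{equation*}
  \nabla_{\bm u} f = \partial_{\bm u} f + \partial_{\bm x} f \cdot \nabla_{\bm u} x \; ,
\end{equation*}
which is exactly the expression stated just before the proposition. The only quantity that is costly here is the sensitivity matrix $\nabla_{\bm u} x \in \mathbb{R}^{n_x \times n_u}$.

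Next I would obtain a closed form for $\nabla_{\bm u} x$ by differentiating the identity $g(x(\bm u), \bm u) = 0$, which holds locally by the Implicit Function theorem (invoked in \refsec{sec:projection:reduced_space}). The chain rule gives $\nabla_{\bm x} g \cdot \nabla_{\bm u} x + \nabla_{\bm u} g = 0$, and since $\nabla_{\bm x} g$ is invertible at $(\bm x, \bm u)$ — the very hypothesis under which $x(\cdot)$ exists — we may solve $\nabla_{\bm u} x = -(\nabla_{\bm x} g)^{-1} \nabla_{\bm u} g$. Substituting into the chain-rule expression yields
\begin{equation*}
  \nabla_{\bm u} f = \partial_{\bm u} f - \partial_{\bm x} f \, (\nabla_{\bm x} g)^{-1} \nabla_{\bm u} g \; .
\end{equation*}

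Finally I would recognize the row vector $-\partial_{\bm x} f \, (\nabla_{\bm x} g)^{-1}$ as $\bm\lambda^\top$, i.e.\ define $\bm\lambda$ by $\bm\lambda^\top \nabla_{\bm x} g = -\partial_{\bm x} f$, which after transposition is precisely the adjoint equation $(\nabla_{\bm x} g)^\top \bm\lambda = -(\partial_{\bm x} f)^\top$ in the statement; this system has a unique solution exactly because $\nabla_{\bm x} g$ is invertible. Replacing the factor by $\bm\lambda^\top$ gives $\nabla_{\bm u} f = \partial_{\bm u} f + \bm\lambda^\top \nabla_{\bm u} g$, the claimed identity.

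There is no genuine analytical obstacle: the result is just a regrouping of the chain rule that trades an $n_x \times n_u$ sensitivity solve for a single adjoint linear solve of size $n_x$. The only point that needs care is the row/column-vector bookkeeping — the paper treats gradients as row vectors — so that the transpose in the adjoint equation is placed correctly; I would therefore state explicitly that $\partial_{\bm x} f$ and $\partial_{\bm u} f$ are row vectors, that $\nabla_{\bm x} g$ and $\nabla_{\bm u} g$ are the $n_x \times n_x$ and $n_x \times n_u$ Jacobians, and that the invertibility of $\nabla_{\bm x} g$ assumed earlier is what makes both $\nabla_{\bm u} x$ and $\bm\lambda$ well defined.
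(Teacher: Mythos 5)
Your proof is correct, but it takes the direct-substitution route rather than the Lagrangian route used in the paper. You differentiate the implicit identity $g(x(\bm u),\bm u)=0$ to obtain the sensitivity matrix explicitly, $\nabla_{\bm u} x = -(\nabla_{\bm x} g)^{-1}\nabla_{\bm u} g$, substitute it into the chain rule, and then recognize the row vector $-\partial_{\bm x} f\,(\nabla_{\bm x} g)^{-1}$ as $\bm\lambda^\top$. The paper instead introduces the Lagrangian $\ell(\bm x,\bm u,\bm\lambda)=f+\bm\lambda^\top g$, observes that $\ell=f$ on the constraint manifold for any $\bm\lambda$, expands $d_{\bm u}\ell$ by the chain rule, and \emph{chooses} $\bm\lambda$ so that the coefficient multiplying $\nabla_{\bm u} x$ vanishes --- it never writes down $\nabla_{\bm u} x$ in closed form. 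The two derivations are algebraically equivalent and both hinge on the invertibility of $\nabla_{\bm x} g$ guaranteed by the Implicit Function theorem. Your version is arguably more elementary and makes the cost comparison ($n_x\times n_u$ sensitivity solve versus one adjoint solve) explicit; the paper's Lagrangian formulation has the advantage that it sets up the machinery reused verbatim in the second-order adjoint-adjoint method of Proposition~2, where an analogous Lagrangian $\hat\ell$ is built on the extended system and the second-order adjoints are again chosen to annihilate the unwanted sensitivities. Your attention to the row-vector convention for gradients is well placed and consistent with the paper's notation.
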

\begin{proof}
  Let $(\bm x, \bm u) \in \mathbb{R}^{n_x} \times \mathbb{R}^{n_u}$
  such that $g(\bm x, \bm u) = 0$.
  For all $\bm{\lambda} \in \mathbb{R}^{n_x}$, we define the Lagrangian
  \begin{equation}
    \label{eq:lagrangian}
  \ell(\bm{x}, \bm{u}, \bm{\lambda}) := f(\bm{x}, \bm{u}) + \bm{\lambda}^\top g(\bm{x}, \bm{u})
  \; .
  \end{equation}
  As $g(\bm{x}, \bm{u}) = 0$, the Lagrangian $\ell(\bm{x}, \bm{u}, \bm{\lambda})$
  does not depend on $\bm{\lambda}$ and we have $\ell(\bm{x}, \bm{u}, \bm{\lambda}) = f(\bm x, \bm u)$.
  Applying the chain-rule, leads to the total derivative of $\ell$
  w.r.t. $\bm{u}$
  \begin{equation*}
    \begin{aligned}
      d_{\bm u} \ell &= \big(\partial_{\bm{x}} f \cdot \nabla_{\bm{u}} x +
       \partial_{\bm{u}} f \big) +
       \bm{\lambda}^\top \big(\nabla_{\bm x} g \cdot \nabla_{\bm u} x +
         \nabla_{\bm u} g
       \big) \\
              &= \big(\partial_{\bm{u}} f + \bm{\lambda}^\top \nabla_{\bm{u}} g \big) +
              \big(\partial_{\bm{x}} f + \bm{\lambda}^\top\nabla_{\bm{x}} g \big) \nabla_{\bm u} x \; .
    \end{aligned}
  \end{equation*}
  Fixing $\bm{\lambda} = - (\nabla_{\bm{x}}g)^{-\top}(\partial_{\bm{x}} f)^\top$,
  eliminates the dependency w.r.t. $\nabla_{\bm u} x$
  and we get the expression in \eqref{eq:reduced_gradient}.
\end{proof}

\subsubsection{Reduced Hessian}
\label{sec:projection:reduced_hessian}

By analogy to Proposition~\ref{prop:adjoint}, we derive the adjoint-adjoint method~\cite{wang1992second}, which instead of using the nonlinear equations $g(\bm{x}, \bm{u}) = 0$, considers the extended nonlinear system
\begin{equation}
  \label{eq:first_order}
  \left\{
  \begin{aligned}
    & g(\bm x, \bm u) = 0 \\
    & \partial_{\bm{x}}{f}(\bm{x}, \bm{u}) + \bm{\lambda}^\top \grad{g}{\bm{x}}(\bm{x}, \bm{u}) = 0 \; ,
  \end{aligned}
  \right.
\end{equation}
(the second line is null by definition of the first-order adjoint $\bm{\lambda}$ in Proposition~\ref{prop:adjoint}).
By associating two adjoints $\bm{z}, \bm{\psi}$ to the two equations~\eqref{eq:first_order},
the adjoint-adjoint method amounts to the Hessian-vector product $(\nabla^2 f) \bm{w}$, $\bm{w} \in \mathbb{R}^{n_u}$.
\begin{proposition}[Adjoint-adjoint method~\cite{wang1992second}]
  \label{prop:secondorderadjoint}
  Let $(\bm x, \bm u, \bm \lambda) \in \mathbb{R}^{n_x} \times \mathbb{R}^{n_u} \times \mathbb{R}^{nx}$
  such that \eqref{eq:first_order} is satisfied, and $\bm w \in \mathbb{R}^{n_u}$
  any real vector.
  Then, the reduced Hessian-vector product is equal to
  \begin{equation}
    \label{eq:hessvecprod}
    (\nabla^2 f) \bm{w} = (\hhess{\ell}{\bm{u}\bm{u}}) \, \bm{w}
    +  (\hhess{\ell}{\bm{u}\bm{x}})^\top \bm{z}
    +  (\grad{g}{\bm{u}})^\top\bm{\psi}
    \;.
  \end{equation}
  where the two second-order adjoints $(\bm z, \bm \psi) \in \mathbb{R}^{n_x} \times \mathbb{R}^{n_x}$
  are defined as solutions of the two linear systems:
  \begin{equation}
    \label{eq:socadjoint}
    \left\{
    \begin{aligned}
      & (\grad{g}{\bm{x}})\phantom{^\top} \bm{z} = - \nabla_{\bm{u}} g \cdot \bm{w} \\
      & (\grad{g}{\bm{x}})^\top \bm{\psi} =
      - (\hhess{\ell}{\bm{x}\bm{u}}) \bm{w} \;
      - (\hhess{\ell}{\bm{x}\bm{x}}) \bm{z} \; .
    \end{aligned}
    \right.
  \end{equation}
\end{proposition}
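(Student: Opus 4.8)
The plan is to obtain $(\nabla^2 f)\bm w$ as the directional derivative, in the direction $\bm w$, of the reduced gradient map $\bm u \mapsto \nabla_{\bm u} f(\bm u)$ furnished by Proposition~\ref{prop:adjoint}. Since $f$ and $g$ are twice continuously differentiable and $\grad{g}{\bm x}$ is invertible, the implicit state $\bm x(\bm u)$ of \refsec{sec:projection:reduced_space} and the first-order adjoint $\bm\lambda(\bm u)$ (the solution of the second line of \eqref{eq:first_order}) are both $C^1$ functions of $\bm u$; jointly, $(\bm x(\bm u),\bm\lambda(\bm u))$ is exactly the implicit function defined by the augmented system \eqref{eq:first_order}. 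With the Lagrangian $\ell$ of \eqref{eq:lagrangian}, Proposition~\ref{prop:adjoint} gives, at $(\bm x(\bm u),\bm u,\bm\lambda(\bm u))$, the identity $(\nabla_{\bm u} f)^\top = (\partial_{\bm u} f)^\top + (\grad{g}{\bm u})^\top \bm\lambda$, and $(\nabla^2 f)\bm w$ is precisely the derivative of this vector along $\bm w$.

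First I would set $\bm z := (\grad{x}{\bm u})\bm w$ and $\bm\psi := (\grad{\bm\lambda}{\bm u})\bm w$, the directional derivatives of the state and of the first-order adjoint. Differentiating the identity $g(\bm x(\bm u),\bm u)=0$ along $\bm w$ gives $(\grad{g}{\bm x})\bm z = -(\grad{g}{\bm u})\bm w$, which is the first linear system in \eqref{eq:socadjoint}. Next, viewing the (transposed) second line of \eqref{eq:first_order}, namely $(\partial_{\bm x} f)^\top + (\grad{g}{\bm x})^\top\bm\lambda = 0$, as an identity in $\bm u$ and differentiating it along $\bm w$, the terms originating from $f$ and from $\bm\lambda^\top g$ group into second derivatives of $\ell$ taken with $\bm\lambda$ frozen, yielding $(\grad{g}{\bm x})^\top\bm\psi = -(\hhess{\ell}{\bm x\bm x})\bm z - (\hhess{\ell}{\bm x\bm u})\bm w$, i.e.\ the second system in \eqref{eq:socadjoint}. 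The key point is that the operator appearing is again $(\grad{g}{\bm x})^\top$, so $\bm\psi$ solves a linear system of the same structure as the first-order adjoint — this is the sense in which the scheme is an \emph{adjoint of the adjoint}.

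Finally I would differentiate $(\partial_{\bm u} f)^\top + (\grad{g}{\bm u})^\top\bm\lambda$ along $\bm w$ by the chain rule, separating the variation into its $\bm x$-, $\bm u$- and $\bm\lambda$-contributions: the $\bm x$-part yields $(\hhess{\ell}{\bm u\bm x})^\top\bm z$, the $\bm u$-part yields $(\hhess{\ell}{\bm u\bm u})\bm w$, and the $\bm\lambda$-part yields $(\grad{g}{\bm u})^\top\bm\psi$, which is exactly \eqref{eq:hessvecprod}. I expect the only real obstacle to be notational bookkeeping: under the row-vector gradient convention used here one must track transposes carefully and keep the ordering of mixed second derivatives consistent, so that $\hhess{\ell}{\bm u\bm x}$ and $\hhess{\ell}{\bm x\bm u}$ are genuine transposes of each other by symmetry of the Hessian of $\ell$ in $(\bm x,\bm u)$; verifying that the resulting map $\bm w \mapsto (\nabla^2 f)\bm w$ is symmetric is a convenient consistency check. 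An equivalent derivation, which perhaps better explains the name, applies the adjoint method of Proposition~\ref{prop:adjoint} directly to the scalar map $\bm u \mapsto (\nabla_{\bm u} f)\,\bm w$, now treating $(\bm x,\bm\lambda)$ as the state constrained by the two equations of \eqref{eq:first_order}; the two associated adjoints are then exactly $\bm z$ and $\bm\psi$.
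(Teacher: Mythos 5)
Your proof is correct, but it takes a genuinely different route from the paper's. The paper proves the result by the same Lagrangian-elimination device as Proposition~\ref{prop:adjoint}: it introduces the auxiliary functional $\hat{\ell} = (\nabla_{\bm u}\ell)\,\bm{w} + \bm{z}^\top g + \bm{\psi}^\top \hat{g}$ (with $\hat{g}$ the residual of the adjoint equation), notes that $\hat\ell$ reduces to $(\nabla_{\bm u} f)\bm w$ on the manifold \eqref{eq:first_order}, and chooses the multipliers $\bm z,\bm\psi$ to cancel the coefficients of the unknown sensitivities $\nabla_{\bm u}x$ and $\nabla_{\bm u}\lambda$ in the total derivative. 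You instead differentiate the reduced-gradient identity directly and \emph{identify} $\bm z = (\nabla_{\bm u}x)\bm w$ and $\bm\psi = (\nabla_{\bm u}\lambda)\bm w$ as the forward directional derivatives of the state and of the first-order adjoint. What your route buys is transparency: the two linear systems \eqref{eq:socadjoint} drop out immediately from differentiating $g(x(\bm u),\bm u)=0$ and the adjoint equation, and your identification explains at a glance why the first system involves $\nabla_{\bm x}g$ while the second involves $(\nabla_{\bm x}g)^\top$ — your $\bm z$ is a tangent (forward) quantity and your $\bm\psi$ an adjoint one. Indeed, if one carries the paper's multiplier assignment through literally (with $\bm z$ paired to $g$ and $\bm\psi$ to $\hat g$), eliminating $\nabla_{\bm u}\lambda$ yields the non-transposed system for the multiplier of $\hat g$, so the roles of $\bm z$ and $\bm\psi$ come out interchanged relative to \eqref{eq:socadjoint}; your sensitivity-based labeling matches the statement as written without that bookkeeping hazard. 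What the paper's route buys is uniformity with Proposition~\ref{prop:adjoint} and the reverse-mode/AD viewpoint that underlies the implementation. You correctly flag the one genuine pitfall — the transpose conventions on the mixed blocks $\nabla^2_{\bm u\bm x}\ell$ versus $\nabla^2_{\bm x\bm u}\ell$ — and your closing remark recovers the paper's derivation as the dual reading, so the two arguments are reconciled.
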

\begin{proof}
  Let $\hat{g}(\bm{x}, \bm{u}, \bm{\lambda}) :=  \partial_{\bm{x}} f(\bm{x}, \bm{u}) + \bm{\lambda}^\top \grad{g}{\bm{x}}(\bm{x}, \bm{u})$.
  We define a new Lagrangian $\hat{\ell}$ associated with
by introducing two second-order adjoints $\bm{z}, \bm{\psi} \in \mathbb{R}^{n_x}$:
\begin{equation*}
  \hat{\ell}(\bm{x}, \bm{u}, \bm{w}, \bm{\lambda}; \bm{z}, \bm{\psi}) :=
  (\nabla_{\bm{u}} \ell)^\top \bm{w} + \bm{z}^\top g(\bm{x}, \bm{u})
  + \bm{\psi}^\top \hat{g}(\bm{x}, \bm{u}, \bm{\lambda})
  \; .
\end{equation*}
When~\eqref{eq:first_order} is satisfied, $\hat{l}$ does not depend
on $\bm z$ and $\bm \psi$.
By deriving $\hat{\ell}$ and choosing $(\bm z, \bm \psi)$ solutions
of the two linear systems~\eqref{eq:socadjoint}, we get
the expression~\eqref{eq:hessvecprod}.
\end{proof}

As $\nabla^2 \ell = \nabla^2 f + \bm{\lambda}^\top \nabla^2 g$,
we observe that both Equations~\eqref{eq:hessvecprod}
and \eqref{eq:socadjoint} involve the third-order tensors
$\nabla^2_{\bm{x}\bm{x}} g,$ $\nabla^2_{\bm{x}\bm{u}} g
,$ and $\nabla^2_{\bm{u}\bm{u}} g$. We will see in the next subsection
that in practice we do not need to evaluate the three tensors explicitly.

\subsection{Reduced callbacks}
\label{sec:projection:discussion}
We have introduced in Sections~\ref{sec:projection:reduced_space} and
\ref{sec:projection:reduced_gradient} all the elements to compute the
callbacks for the reduced space problem~\eqref{eq:reduced_space_problem}.

\paragraph{Objective and constraints}
Evaluating the objective $f(x(\bm{u}), \bm{u})$ and the constraints
$h(x(\bm{u}), \bm{u})$ requires the evaluation of $x(\bm{u})$ with
the Newton-Raphson algorithm~\eqref{eq:projection:newtonraphson}.
Typically, the algorithm converges in a few iterations.
At iteration $k$, the algorithm amounts to (i) evaluate the
sparse Jacobian $\nabla_{\bm{x}} g_k = \nabla_{\bm{x}} g(\bm{x}_k, \bm{u})$  (ii) solve
the linear system $(\nabla_{\bm{x}} g_k) \bm{d}_k = - \bm{g}_k$
to find the descent direction $\bm{d}_k$. In step (i), we evaluate
the sparse Jacobian $\nabla_{\bm{x}} g_k$ using forward-mode automatic
differentiation. In step (ii), we can use any sparse linear solver.

\paragraph{Reduced gradient and Jacobian}
The reduced gradient $\nabla_{\bm{u}} f$ requires the evaluation
of the gradients $\partial_{\bm{x}} f, \partial_{\bm{u}} f$ and the Jacobian $\nabla_{\bm{u}} g$.
The former are evaluated with manual adjoint differentiation,
the latter with forward-mode automatic differentiation. The linear system
$(\nabla_{x} g)^\top \bm{\lambda} = - (\partial_{\bm{x}} f)^\top$ is solved
with the sparse linear solver used in the power flow algorithm (same matrix, but transposed).
Similarly, we evaluate the reduced Jacobians $(\nabla_{\bm{u}} h, \nabla_{\bm{u}} r)$, except
that it requires solving $m$ linear systems to evaluate the intermediate
adjoints $\bm{\mu}_i$, with, for all $i=1, \cdots, m$: $(\nabla_{\bm{x}} g)^\top \bm{\mu}_i =
- (\partial_{\bm{x}} h_i)^\top$.

\paragraph{Reduced Hessian}
The reduced Hessian of the objective $\nabla^2 f$ is evaluated with the adjoint-adjoint method,
using $n_u$ Hessian-vector products $\nabla^2 f \cdot \bm{e}_i$ (with $\bm{e}_i$ the $i$-th
Cartesian basis vector).
In \eqref{eq:hessvecprod}, we avoid evaluating explicitly the tensors
$\nabla^2_{\bm{x}\bm{x}} g,$ $\nabla^2_{\bm{x}\bm{u}} g
,$ and $\nabla^2_{\bm{u}\bm{u}} g$ by computing inplace the forward-over-reverse
projection $\sum_{i=1}^{n_x} \lambda_i (\nabla^2 g_i) \bm{w}$.
In total, the dense reduced Hessian $\nabla^2 f$ requires solving $2n_u$ sparse linear systems , while factorizing both sparse matrices in \eqref{eq:socadjoint} only once.

However, evaluating the Hessian of the constraints $\nabla^2_{\bm{u}\bm{u}} h$
is generally not tractable. The adjoint-adjoint procedure (see Proposition~\ref{prop:secondorderadjoint})
would be repeated
for each first-order adjoint $\mu_i$, with $i=1,\cdots,m$ and would involved solving $2 n_u \times m$ linear systems.
In the next section, we will reformulate all operational constraints as
soft AL penalties. By doing so, we will only need
to evaluate the reduced Hessian of the AL functional.

\section{Resolution algorithms}
\label{sec:algorithm}

In the previous section, we have devised a method to compute the reduced gradient,
the reduced Jacobian and the reduced Hessian of the reduced space problem
\eqref{eq:reduced_space_problem}. In~\ref{sec:algo:auglag}, we reformulate the
reduced space problem with an AL formulation.
We exploit the AL formulation to design two algorithms,
both working in the reduced space. The first algorithm, introduced in \refsec{sec:algo:static_opf},
solves the static OPF with an AL algorithm.
The second algorithm, in \refsec{sec:algo:rto_opf}, is able to track the
solution of a real-time OPF problem.

\subsection{Augmented Lagrangian formulation}
\label{sec:algo:auglag}
In what follows, we gather all inequalities
in a single functional $c: \mathbb{R}^{n_u} \to \mathbb{R}^{m}$: $\bm{s}^\flat \leq c(\bm{u}) \leq \bm{s}^\sharp$,
with $c(\bm u) = \big(h(\bm u), r(\bm u)\big)$,
$\bm{s}^\flat = \big(\bm{0}_{2n_l}, \bm{r}^\flat\big)$ and
$\bm{s}^\sharp = \big(\bm{h}^{\sharp}, \bm{r}^\sharp\big)$.
We rewrite Problem~\eqref{eq:reduced_space_problem} in standard form by
introducing a slack vector $\bm{s}^\flat \leq \bm{s} \leq \bm{s}^\sharp$
satisfying $c(\bm u) - \bm s = 0$:
\begin{equation}
  \label{eq:problem_algo}
  \min_{
    \substack{
  \bm{u}^\flat \leq \bm u \leq \bm{u}^\sharp, \\
  \bm{s}^\flat \leq \bm s \leq \bm{s}^\sharp
    }
  } \; f(\bm u)
  \quad \text{subject to} \quad c(\bm u) - \bm s =  0  \; .
\end{equation}
Problem~\eqref{eq:problem_algo} can be solved with an interior-point method (IPM). However,
the evaluation of the Hessian of the Lagrangian requires
the resolution of $(m +1) \times (2 n_u + 1)$ linear systems (see \refsec{sec:projection:discussion}),
which is quickly prohibitive. To alleviate this, both Quasi-Newton
and constraint aggregation schemes can be used, so far with mixed results.
We refer to \cite{kardos2020reduced} for a detailed discussion about
the resolution of the reduced problem~\eqref{eq:problem_algo} with IPM.

By contrast, by moving all inequality constraints into the objective,
an AL formulation requires the resolution of only $(2 n_u +1)$
linear systems to evaluate the full reduced Hessian.
For a given penalty $\rho > 0$ and multiplier vector $\bm y \in \mathbb{R}^m$,  the 
AL subproblem associated with Problem~\eqref{eq:problem_algo} states
\begin{equation}
  \label{eq:auglag_subproblem}
  \min_{
    \substack{
  \bm{u}^\flat \leq \bm u \leq \bm{u}^\sharp, \\
  \bm{s}^\flat \leq \bm s \leq \bm{s}^\sharp
    }
  } \; f(\bm u) + \bm{y}^\top \big( c(\bm u) - \bm s\big) +
  \frac{\rho}{2} \| c(\bm u) - \bm s \|^2 \; .
\end{equation}

Note that introducing the slack variable $\bm{s}$ increases the dimension of the problem
from $n_u$ to $n_u + m$, with $m$ potentially a large number.
However, it is well known that the slack variable depends implicitly on the control $\bm{u}$
\cite{rockafellar1976augmented,bertsekas1976multiplier}. Unfortunately,
removing the slack $\bm{s}$ in \eqref{eq:auglag_subproblem} leads to an optimization problem
with discontinuous second-order derivatives, impairing the solution algorithm.
To avoid this, we exploit instead the structure of the KKT system and show that
the slack descent direction $\bm{d}_s$ depends linearly on the control descent direction $\bm{d}_u$.

\subsubsection{Callbacks}
we define the AL functional as:
$L_\rho(\bm u, \bm s; \bm y) = f(\bm u) + \bm{y}^\top \big( c(\bm u) - \bm s\big) +
  \frac{\rho}{2} \| c(\bm u) - \bm s \|^2$ yielding
\begin{equation}
  \label{eq:gradient_auglag}
  \left\{
    \begin{aligned}
      & \nabla_{\bm u} L_\rho(\cdot) = \nabla_{\bm{u}} f(\bm{u}) + \big(\bm{y} + \rho(c(\bm{u}) - \bm{s})\big)^\top \nabla_{\bm{u}} c(\bm{u}) \\
      & \nabla_{\bm s} L_\rho(\cdot) = - \big(\bm{y} + \rho (c(\bm{u}) - \bm{s})\big)^\top
      \; .
    \end{aligned}
  \right.
\end{equation}
We note that evaluating the gradient of the AL problem
involves the reduced gradient $\nabla_{\bm u} f$ and a Jacobian-transpose
vector product $\bm{v}^\top \nabla_{\bm{u}} c$ (both efficiently
computed with our adjoint implementation).

By differentiating again~\eqref{eq:gradient_auglag}, we get the
Hessian of the AL functional:
\begin{equation}
  \label{eq:hessian_auglag}
  \nabla^2 L_\rho =
  \begin{bmatrix}
    H_{\bm{u}\bm{u}} + \rho (\nabla_{\bm{u}} c)^\top \nabla_{\bm{u}} c  & - \rho (\nabla_{\bm{u}} c)^\top  \\
    - \rho \nabla_{\bm{u}} c  & \rho I
  \end{bmatrix}
  \; ,
\end{equation}
where $H_{\bm{u}\bm{u}} = \nabla^2 f(\bm{u}) + \sum_{i=1}^m \big(y_i + \rho (c_i(\bm{u}) - s_i)\big) \nabla^2 c_i(\bm{u})$.

\subsubsection{Scaling}
\label{sec:algo:scaling}
In our implementation, we have to scale the problem to ensure that the order
of magnitude of the objective matches those of the different constraints (and thus
avoid any degeneracy). The scaling of the objective $\sigma_f \in \mathbb{R}$
and the scaling of the constraints $\bm{\sigma}_c \in \mathbb{R}^m$ can be heuristically estimated, or by scaling the constraints with the absolute norm of their gradient
\cite{wachter2006implementation,birgin2014practical}.
By noting $D_c = \diag{\bm{\sigma}_c}$, the scaled AL problem writes out:
\begin{equation*}
  \label{eq:auglag_subproblem_scaled}
  \min_{
    \substack{
  \bm{u}^\flat \leq \bm u \leq \bm{u}^\sharp, \\
  \bm{s}^\flat \leq \bm s \leq \bm{s}^\sharp
    }
  } \; \sigma_f \cdot f(\bm u) + \bm{y}^\top \cdot D_c \big( c(\bm u) - \bm s\big) +
  \frac{\rho}{2} \| D_c\big(c(\bm u) - \bm s \big) \|^2 \; .
\end{equation*}

\subsection{Static optimal power flow}
\label{sec:algo:static_opf}
The static OPF amounts to solving~\eqref{eq:problem_algo}
with an AL algorithm.

\subsubsection{Solving the Augmented Lagrangian's subproblems}
Starting from initial primal-dual
variables $(\bm{u}_0, \bm{s}_0; \bm{y}_0)$, the algorithm solves
at each iteration~\eqref{eq:auglag_subproblem} to a given tolerance,
for a fixed penalty $\rho$ and multiplier vector $\bm{y}$.

Here, we solve each subproblem \eqref{eq:auglag_subproblem} with an IPM.
Even if IPMs lack the inherent warmstart capability of active-set methods~\cite{nocedal_numerical_2006},
they do not require reordering of the Hessian matrix; expensive on GPUs.

We note $\bm w := (\bm u, \bm s) \in \mathbb{R}^{n_u + m}$ the primal variable,
and $\bm{z} \in \mathbb{R}^{n_u +m}$ the dual variable associated with
the bound constraints linked to $\bm w = (\bm u, \bm s)$.
The IPM solves the following unconstrained problem:
\begin{multline}
  \label{eq:ipm_subproblem}
  \min_{\bm u, \bm s} \; \psi_{\mu}(\bm w; \bm y) := L_\rho(\bm u, \bm s; \bm y)
  + B_\mu(\bm{u}, \bm{s}) \; ,
\end{multline}
with $B_\mu(\cdot)$ the barrier term
\begin{multline}
  B_\mu(\bm u, \bm s) =
  - \mu \sum_{i=1}^{n_u} \big(\log(u_i - u_i^\flat) + \log(u_i^\sharp - u_i)\big) \\
  - \mu \sum_{i=1}^{m} \big(\log(s_i - s_i^\flat) + \log(s_i^\sharp - s_i)\big)
  \; .
\end{multline}
Introducing $W = \diag{\bm w}$ and $Z = \diag{\bm z}$,
the first-order optimality
conditions of \eqref{eq:ipm_subproblem} are
\begin{equation}
  \label{eq:ipm_firstorderconditions}
  \left\{
  \begin{aligned}
    & \nabla L_\rho(\bm w; \bm y) - \bm z = 0  \\
    & WZ - \mu \bm e = 0 \; .
  \end{aligned}
  \right.
\end{equation}
The IPM algorithm solves iteratively the system of nonlinear
equations~\eqref{eq:ipm_firstorderconditions} with a Newton method.
Along the iterations, the barrier term $\mu$ is driven to 0 to recover
the first-order conditions of the original AL subproblem~\eqref{eq:auglag_subproblem}.

\subsubsection{Solving the KKT system}

By applying the Newton method to the first-order conditions~\eqref{eq:ipm_firstorderconditions}, the
descent direction $\bm d = (\bm d_w, \bm d_z)$ is computed directly
as a solution of the nonsymmetric linear system
\begin{equation}
  \label{eq:ipm_kkt}
  \begin{bmatrix}
    \nabla^2 L_\rho & -I  \\
    Z & V
  \end{bmatrix}
  \begin{bmatrix}
    \bm d_w \\ \bm d_z
  \end{bmatrix}
  = -
  \begin{bmatrix}
     \nabla L_\rho(\bm w; \bm y) - \bm z   \\
     WZ - \mu \bm e
  \end{bmatrix}
  \; .
\end{equation}
Eliminating the last block row in \eqref{eq:ipm_kkt},
the descent $\bm d_w$ is
\begin{equation}
  \label{eq:ipm_kkt_reduced}
  \big[ \nabla^2 L_\rho + \Sigma \big] \bm{d}_w
  =
  - \nabla_{\bm w} \psi_\mu(\bm w, \bm y)^\top
  \; ,
\end{equation}
with the diagonal matrix $\Sigma = W^{-1} Z$. Once $\bm{d}_w$ computed with \eqref{eq:ipm_kkt_reduced},
we recover the dual descent direction with $\bm d_z = \mu W^{-1} \bm{e} - \bm{z} - \Sigma\; \bm{d}_w$.
To solve the linear system~\eqref{eq:ipm_kkt_reduced} efficiently, we exploit the structure
of the AL's Hessian~\eqref{eq:hessian_auglag} with a Schur-complement approach.

\begin{proposition}[Schur complement]
  \label{prop:schur}
  The linear system~\eqref{eq:ipm_kkt_reduced} is equivalent to
  solving
  \begin{equation}
    \label{eq:ipm_kkt_schur}
    \left\{
    \begin{aligned}
      S_{\bm{u}\bm{u}} \bm{d}_u &=
      - \nabla_{\bm{u}} \psi_\mu^\top + \rho \nabla_{\bm{u}} c^\top \big[\Sigma_s + \rho I]^{-1} \nabla_{\bm{s}} \psi_\mu^\top
        \\
      \bm{d}_s &= \big[\Sigma_s + \rho I]^{-1}\big(- \nabla_{\bm{s}} \psi_\mu^\top + \rho (\nabla_{\bm{u}} c) \bm{d}_u \big) \; ,
    \end{aligned}
    \right.
  \end{equation}
  where $S_{\bm{u}\bm{u}}$ is the Schur-complement of the lower right block
  of the matrix $\big[\nabla^2 L_\rho + \Sigma \big]$:
  \begin{equation}
    \label{eq:schur_matrix}
    S_{\bm{u}\bm{u}} = H_{\bm{u}\bm{u}} + \Sigma_u +
    \rho (\nabla_{\bm{u}} c)^\top \big(1 - \rho \big[\Sigma_s + \rho I\big]^{-1}\big) \nabla_{\bm{u}} c
    \; .
  \end{equation}
\end{proposition}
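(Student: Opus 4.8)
The plan is to obtain \eqref{eq:ipm_kkt_schur} from the reduced KKT system \eqref{eq:ipm_kkt_reduced} by a block Gaussian elimination of the slack direction $\bm{d}_s$; this elimination is exactly a \emph{Schur-complement reduction} of the $(\bm{s},\bm{s})$ block of the matrix $\nabla^2 L_\rho + \Sigma$.

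First I would observe that $\Sigma = W^{-1}Z$ is a diagonal matrix that respects the splitting $\bm{w} = (\bm{u},\bm{s})$, hence block-diagonal with diagonal blocks $\Sigma_u$ (on the $\bm{u}$-components) and $\Sigma_s$ (on the $\bm{s}$-components). Adding this to the explicit form of $\nabla^2 L_\rho$ in \eqref{eq:hessian_auglag} gives the $2 \times 2$ block matrix
\begin{equation*}
  \nabla^2 L_\rho + \Sigma =
  \begin{bmatrix}
    H_{\bm{u}\bm{u}} + \Sigma_u + \rho (\nabla_{\bm{u}} c)^\top \nabla_{\bm{u}} c & -\rho (\nabla_{\bm{u}} c)^\top \\
    -\rho\, \nabla_{\bm{u}} c & \rho I + \Sigma_s
  \end{bmatrix} ,
\end{equation*}
so that, writing $\bm{d}_w = (\bm{d}_u,\bm{d}_s)$ and $\nabla_{\bm{w}}\psi_\mu^\top = (\nabla_{\bm{u}}\psi_\mu^\top,\nabla_{\bm{s}}\psi_\mu^\top)$, the system \eqref{eq:ipm_kkt_reduced} takes the form of two block equations coupling $\bm{d}_u$ and $\bm{d}_s$.

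Next, since $\rho > 0$ and $\Sigma_s$ is a nonnegative diagonal matrix at any strictly feasible iterate, the lower-right block $\rho I + \Sigma_s$ is positive definite, hence invertible. Solving the second block equation for $\bm{d}_s$ gives directly the second line of \eqref{eq:ipm_kkt_schur}. Substituting this expression into the first block equation and gathering the terms that multiply $\bm{d}_u$, the coefficient of $\bm{d}_u$ becomes $H_{\bm{u}\bm{u}} + \Sigma_u + \rho (\nabla_{\bm{u}} c)^\top \nabla_{\bm{u}} c - \rho^2 (\nabla_{\bm{u}} c)^\top (\Sigma_s + \rho I)^{-1} \nabla_{\bm{u}} c$; factoring out $\rho (\nabla_{\bm{u}} c)^\top (\cdot) \nabla_{\bm{u}} c$ and using the identity $I - \rho(\Sigma_s + \rho I)^{-1} = \Sigma_s(\Sigma_s + \rho I)^{-1}$ identifies it with $S_{\bm{u}\bm{u}}$ as in \eqref{eq:schur_matrix}, while the remaining (now known) terms move to the right-hand side and produce the first line of \eqref{eq:ipm_kkt_schur}. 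Conversely, any solution of \eqref{eq:ipm_kkt_schur} reassembles into a solution of \eqref{eq:ipm_kkt_reduced}, which yields the asserted equivalence; the dual direction $\bm{d}_z$ is recovered from $\bm{d}_w$ through the relation $\bm{d}_z = \mu W^{-1}\bm{e} - \bm{z} - \Sigma\, \bm{d}_w$ already stated before the proposition.

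Since this is routine linear algebra, I do not expect a genuine obstacle; the only points that require care are that $\Sigma$ is block-diagonal with respect to the $(\bm{u},\bm{s})$ partition --- which is precisely what makes the reduction close in the stated form --- the invertibility of $\rho I + \Sigma_s$, and the careful bookkeeping of signs and transposes when substituting $\bm{d}_s$. As a cross-check one can verify that the block-triangular factorization of $\nabla^2 L_\rho + \Sigma$ induced by this elimination has $S_{\bm{u}\bm{u}}$ as its leading block, confirming that no term has been lost.
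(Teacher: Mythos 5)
The paper itself gives no proof of Proposition~\ref{prop:schur}, so there is nothing to compare against; your block-elimination (Schur-complement) argument is evidently the intended one, and its structure is sound: $\Sigma=W^{-1}Z$ is block-diagonal in the $(\bm{u},\bm{s})$ splitting, the $(\bm{s},\bm{s})$ block of $\nabla^2 L_\rho+\Sigma$ is $\Sigma_s+\rho I$, which is positive definite and hence invertible, solving the second block row for $\bm{d}_s$ gives the second line of \eqref{eq:ipm_kkt_schur}, and the coefficient of $\bm{d}_u$ after back-substitution is exactly \eqref{eq:schur_matrix} (your identity $I-\rho(\Sigma_s+\rho I)^{-1}=\Sigma_s(\Sigma_s+\rho I)^{-1}$ is also correct).

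The one step you defer --- ``the remaining (now known) terms move to the right-hand side and produce the first line'' --- is precisely where the sign bookkeeping you promised does not check out against the statement as printed. The first block row reads $\big(H_{\bm{u}\bm{u}}+\Sigma_u+\rho(\nabla_{\bm{u}}c)^\top\nabla_{\bm{u}}c\big)\bm{d}_u-\rho(\nabla_{\bm{u}}c)^\top\bm{d}_s=-\nabla_{\bm{u}}\psi_\mu^\top$; substituting $\bm{d}_s=\big[\Sigma_s+\rho I\big]^{-1}\big(-\nabla_{\bm{s}}\psi_\mu^\top+\rho(\nabla_{\bm{u}}c)\bm{d}_u\big)$ makes the cross term contribute $+\rho(\nabla_{\bm{u}}c)^\top\big[\Sigma_s+\rho I\big]^{-1}\nabla_{\bm{s}}\psi_\mu^\top$ on the left, hence $-\rho(\nabla_{\bm{u}}c)^\top\big[\Sigma_s+\rho I\big]^{-1}\nabla_{\bm{s}}\psi_\mu^\top$ on the right. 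The correct right-hand side of the first equation is therefore $-\nabla_{\bm{u}}\psi_\mu^\top-\rho(\nabla_{\bm{u}}c)^\top\big[\Sigma_s+\rho I\big]^{-1}\nabla_{\bm{s}}\psi_\mu^\top$, with a minus sign where \eqref{eq:ipm_kkt_schur} prints a plus. Most likely this is a typo in the proposition, but as written your proof asserts agreement with the printed formula, which means it either contains a compensating sign error or silently skips the very computation it identifies as the delicate one. A complete proof should carry out the substitution explicitly and either correct the sign or flag the discrepancy; everything else in your argument, including the converse direction and the recovery of $\bm{d}_z$, is fine.
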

By using Proposition~\ref{prop:schur}, we observe that the
slack descent direction $\bm{d}_s$ depends linearly on the control
descent direction $\bm{d}_u$. Instead of factorizing the regularized
Hessian matrix $\nabla^2 L_\rho + \Sigma$ (with dimension
$(n_u + m) \times (n_u +m)$), we only have to factorize the
Schur complement matrix~\eqref{eq:schur_matrix} (with
dimension $n_u \times n_u$): the complexity becomes independent
of the number of constraints in the problem.

Note that with a inertia-controlling algorithm, we can
control $\Sigma$ to ensure a positive definite matrix $\nabla^2 L_\rho + \Sigma$. It follows that $S_{\bm{u}\bm{u}}$
is positive definite~\cite[Theorem 7.7.7, p.495]{horn2012matrix}.
Thus, the Schur-complement matrix can be factorized efficiently
with a dense Cholesky factorization; readily available on GPUs.

\subsubsection{Static OPF algorithm}
The static OPF algorithm implements a typical
AL algorithm, as presented in \cite{nocedal_numerical_2006,conn2013lancelot}.
The most expensive step is the resolution
of the subproblems with IPM (all other operations are only involving vector
or scalar operations). We adapt the IPM algorithm to our AL context.
At each iteration $k$, we warmstart the IPM algorithm with the
previous primal-dual solutions $(\bm{w}_{k-1}, \bm{z}_{k-1})$.
The initial barrier is chosen according to \cite{ma2021julia}, which
decreases significantly the IPM iterations when we are close to the optimal solution.

The AL algorithm has two main bottlenecks: (i) its convergence is only linear~\cite{nocedal_numerical_2006}
(ii) at a new iterate $\bm{u}$ there is no guarantee that there exists a corresponding
state $\bm{x}$ (the Jacobian $\nabla_{\bm{x}} g$ can be singular if we leave the power flow domain).
In future work we will alleviate (i) with a refinement step~\cite{birgin2008improving}, and (ii) can be safeguarded by computing the maximum step in the line-search procedure with bifurcation analysis~\cite{dobson1993computing}.

\subsection{Real-time optimal power flow}
\label{sec:algo:rto_opf}
Like the static OPF algorithm, the real-time OPF algorithm is also derived from
the AL formulation~\eqref{eq:auglag_subproblem}.
Our real-time OPF algorithm follows the method introduced
in \cite{zavala2010real}, and does not use an active-step procedure as in \cite{tang2017real}.

In this subsection, we assume that the problem is parameterized by a time index $t \in \mathbb{N}$,
corresponding to varying load conditions $(\bm{p}_t^d, \bm{q}_t^d)$:
\begin{equation}
  \label{eq:auglag_subproblem_rto}
  \min_{
  \bm{u}, \bm{s}
  } \; f_t(\bm u) + \bm{y}^\top \big( c_t(\bm u) - \bm s\big) +
  \frac{\rho}{2} \| c_t(\bm u) - \bm s \|^2 \; .
\end{equation}
To avoid an expensive explicit solution for $\eqref{eq:auglag_subproblem_rto}$
for each time $t$, we track a suboptimal solution by reiterating the following
procedure. We initiate at time $t=0$ the real-time algorithm
with an optimal solution $\bm{w}_0^\star = (\bm{u}_0^\star, \bm{s}_0^\star)$
(for instance computed with the static OPF algorithm introduced in \refsec{sec:algo:static_opf}).
Then, for all time $t$, we update the primal-dual variables $(\bm{w}_t, \bm{y}_t)$ as follows
\begin{enumerate}
  \item For new loads $(\bm{p}_t^d, \bm{q}_t^d)$, compute
    the gradient $\bm{g}_t = \nabla_{\bm{w}} L_{\rho,t}(\bm{w}_t, \bm{y}_t)$
    and the Hessian $H_t = \nabla^2_{\bm{w}\bm{w}} L_{\rho,t}(\bm{w}_t, \bm{y}_t)$
  \item Solve the bounded quadratic problem (QP)
    and update the primal variable $\bm{w}_{t+1}$ with the solution
    \begin{equation}
      \label{eq:qp_rto}
      \begin{aligned}
        \min_{\bm{w}} \;& \bm{g}_t^\top (\bm{w} - \bm{w}_t) +
     \frac 12 (\bm{w} - \bm{w}_t)^\top H_t (\bm{w} - \bm{w}_t) \\
        \text{s.t. } \quad & \bm{w}^\flat \leq \bm{w} \leq \bm{w}^\sharp \; ,
      \end{aligned}
    \end{equation}
  \item Set $\bm{y}_{t+1} = \bm{y}_t + \rho (c_t(\bm{u}_{t+1}) - \bm{s}_{t+1})$
\end{enumerate}
Step (1) can be evaluated efficiently using the reduced space procedure
we introduced in Section~\ref{sec:projection}. The QP problem~\eqref{eq:qp_rto}
can be solved efficiently with an IPM method.

Indeed, the problem~\eqref{eq:qp_rto}
presents the same structure as the original AL subproblem~\eqref{eq:auglag_subproblem}.
That means that in practice, we can solve~\eqref{eq:qp_rto} with IPM,
using the same Schur-complement procedure introduced in Proposition~\ref{prop:schur}.
As the Hessian is constant, most of the time is spent factorizing the Schur-complement
matrix in the IPM algorithm.

\section{Numerical results}
\label{sec:results}
We now implement the reduction presented in Section~\ref{sec:projection},
and use the implementation to solve the static OPF and real-time OPF
presented in Section~\ref{sec:algorithm}.
We detail the implementation we are using in \refsec{sec:results:implementation}.
Then, the two algorithms are tested respectively in \refsec{sec:results:static_opf}
and \refsec{sec:results:rt_opf}.

\subsection{Implementation}
\label{sec:results:implementation}
The two algorithms introduced in Section~\ref{sec:algorithm} have
two main blocks: the computation of the dense reduced Hessian~\eqref{eq:hessian_auglag}, and the
factorization of the (dense) Schur-complement matrix~\eqref{eq:schur_matrix}.
Both operations are amenable to GPU accelerators, leading us to an entirely GPU accelerated implementation
in the programming language Julia publicly available \footnote{\url{https://github.com/exanauts/ExaPF-Opt}}.

\subsubsection{Implementing the callbacks on the GPU}
\label{sec:results:callbacks}
Based on the algorithm laid out in \refsec{sec:projection:discussion}
\begin{itemize}
  \item \emph{Kernels:} we use the portability layer KernelAbstractions.jl to implement the objective
    $f(\cdot)$, the power flow $g(\cdot)$, and the constraints $c(\cdot)$,
    so we can evaluate all functions using vectorization on the GPU.
  \item \emph{Automatic differentation:} we have developed a custom GPU backend
    to compute the first and second-order sensitivities of the kernels
    $f(\cdot), g(\cdot), c(\cdot)$
  \item \emph{Sparse linear system:}
    all the sparse linear systems in \S\ref{sec:projection:reduced_gradient} are solved using the LU refactorization solver {\tt cusolverRF}.
    The sparsity pattern associated with the power flow's Jacobians $\nabla_{\bm x} g$ is static, as defined
    by the network topology.
    Hence, we factorize the Jacobian during
    the presolve and transfer the factorization to the device. Every new
    Jacobian $\nabla_{\bm x} g$ is refactorized directly on the GPU, without
    any data host-device transfer. With the Jacobian refactorized,
    {\tt cusolverRF} is able to solve the linear system with multiple right-hand-sides,
    in batch. Together with our custom AD GPU backend, it enables us
    to accumulate the reduced Hessian in one shot using batched parallel Hessian-vector products~\eqref{eq:hessvecprod}.
  \item \emph{Power flow solver:} the power flow equations are solved with the Newton-Raphson
    algorithm~\eqref{eq:projection:newtonraphson}, entirely on the GPU. We stop the algorithm
    once: $\|g(\bm{x}_k, \bm{u})\|_2 < 10^{-10}$.
\end{itemize}

\subsubsection{Porting the optimization algorithms to the GPU}
As discussed in Section~\ref{sec:algorithm}, the IPM algorithm
is the core component both for the static OPF and for the real-time OPF algorithms.
Here, we use the MadNLP solver~\cite{shin2020graph}, written entirely in Julia. We modify
MadNLP to wrap the GPU callbacks. To that end, MadNLP
takes the reduced Hessian --- computed on the GPU as described in \S\ref{sec:results:callbacks}
--- assembles the Schur-complement~\eqref{eq:schur_matrix} with {\tt cuBLAS},
and then applies the Cholesky factorization of {\tt cuSOLVER}.
All that procedure happens entirely on the GPU, without any data transfer to the host.

\subsubsection{Benchmark library} We test the static and the real-time OPF algorithms
on three different instances from the PGLIB library~\cite{babaeinejadsarookolaee2019power},
depicted in Table~\ref{tab:test_instances}.
We run all experiments with a NVIDIA V100 GPU (with 32GB RAM),
using {\tt CUDA 11.3}.

\begin{table}[!ht]
  \resizebox{.5\textwidth}{!}{
    \begin{tabular}{c|cc|ccc}
      {\bf Case} & {\bf $n_v$} & {\bf $n_e$}& {\bf $n_x$} & {\bf $n_u$}& $m$ \\
      \hline
      PEGASE1354 & 1,354 & 1,991 & 2,447 & 519 & 5,337 \\
      PEGASE2869 & 2,869 & 4,582 & 5,227 & 1,019 & 12,034 \\
      PEGASE9241 & 9,241 & 16,049 & 17,036 & 2,889 & 41,340 \\
    \end{tabular}
  }
  \caption{Case instances obtained from PGLIB}
  \label{tab:test_instances}
\end{table}

\subsection{Static optimal power flow}
\label{sec:results:static_opf}
We test the static OPF algorithm presented in \S\ref{sec:algo:static_opf}
on the three instances in Table~\ref{tab:results:static_opf}. In the stopping criterion,
we set $\eta_{primal} = 10^{-5}$ and $\eta_{dual} = 10^{-4}$.
The results are presented in Table~\ref{tab:results:static_opf}
(the reference objective $f^\star$ and computation time $t_{ref}$ are computed by solving the original OPF problem~\eqref{eq:acopf}
with Ipopt+MA27 and PowerModels.jl~\cite{coffrin2018powermodels}). We observe that we are able to recover Ipopt's solution, with good accuracy.
The algorithm takes many iterations to converge, leading to total running time
being an order of magnitude greater than Ipopt (on {\tt case9241pegase}, Ipopt converges in only 60s).
However, our algorithm remains tractable, and is a net improvement comparing to
previous implementation of the reduced space methods~\cite{kardos2020reduced}.

We detail in Figure~\ref{fig:convergence_auglag} the convergence of the algorithm
on {\tt case1354pegase}. The algorithm converges in 369 iterations. The IPM algorithm
is restarted 18 times in the AL routine, to update
the penalty $\rho$ and the multiplier $\bm{y}$ (thus explaining the peaks in the evolution of the dual feasibility).
As we increase the penalty $\rho$, the primal infeasibility decreases linearly.
The second plot depicts the evolution of the maximum relative violation for
each operational constraints (in p.u). We observe that in practice
the evolution of the infeasibility depends on the
scaling used for the different constraints (\S\ref{sec:algo:scaling}): the greater
the scaling $\sigma_{c,i}$ is, the faster the infeasibility of the constraint $i$
will be driven to 0.

\begin{table}[!ht]
  \resizebox{.5\textwidth}{!}{
    \begin{tabular}{c|ccc|ccc}
      {\bf Case} &  $f$ & $(f - f^\star)/f^\star$ & $t_{ref}$ (s) &Time AL (s) & \#it & Time/it (s) \\
      \hline
      PEGASE1354 & $7.4069287 \times 10^{4}$ & $9.2 \times 10^{-7}$ & 2.5 & 48. & 369 & 0.13 \\
      PEGASE2869 &  $1.3399920 \times 10^{5}$ & $6.7 \times 10^{-7}$ & 6.3 & 134. & 427 & 0.31 \\
      PEGASE9241 & $3.1591213 \times 10^{5}$ & $3.4 \times 10^{-6}$ & 60. &1495. & 672 & 2.22
    \end{tabular}
  }
  \caption{Performance of the static OPF algorithm.}
  \label{tab:results:static_opf}
\end{table}


\begin{figure}[!ht]
  \centering
  \includegraphics[width=0.5\textwidth]{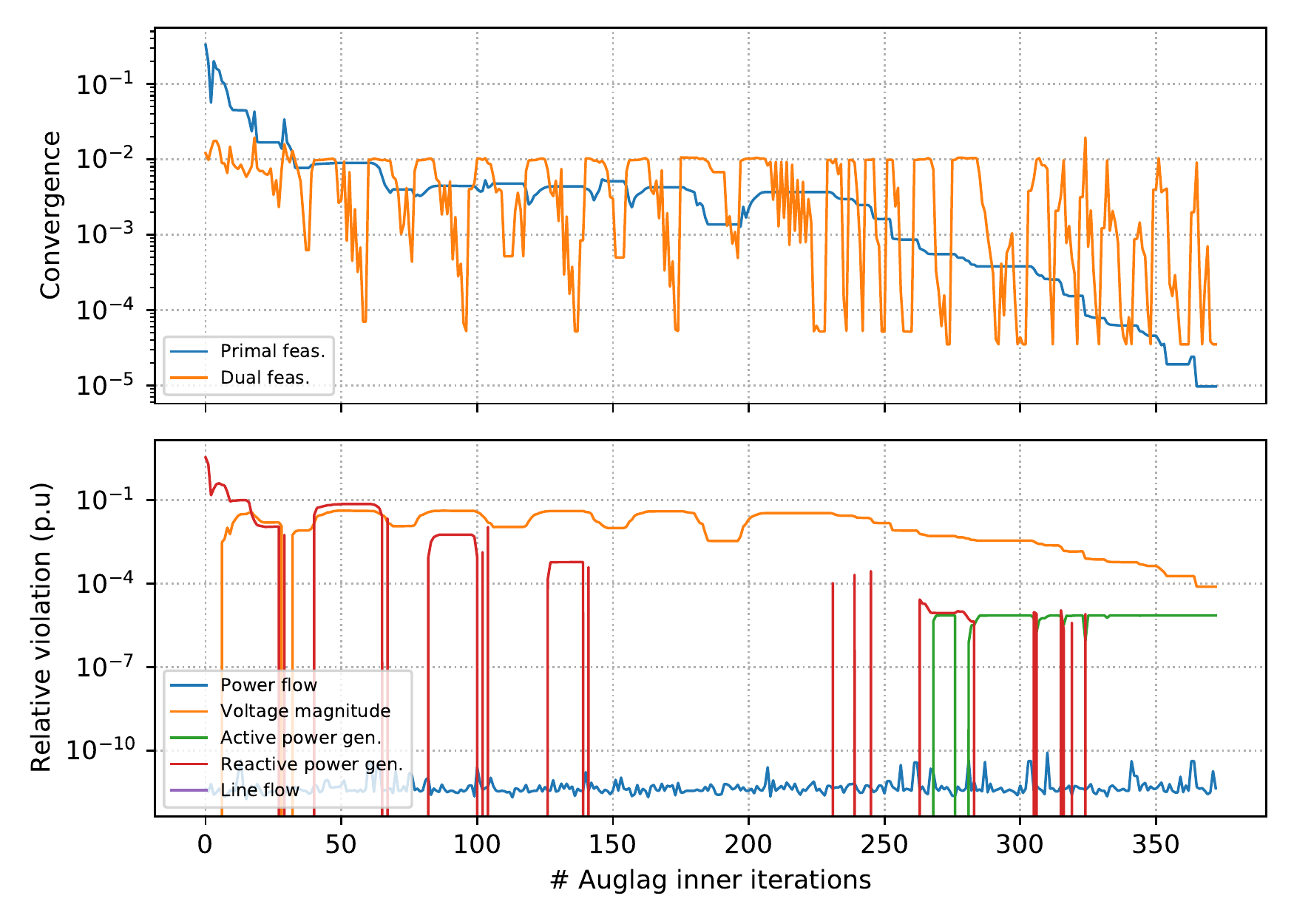}
  \caption{Solving the static OPF for {\tt case1354pegase}: the first plot displays the evolution
  of the primal and the dual infeasibility, and the second plot displays the
  evolution of the relative infeasibility for the different kind of operational constraints.
  }
  \label{fig:convergence_auglag}
\end{figure}

\subsection{Real-time optimal power flow}
\label{sec:results:rt_opf}
We test the real-time OPF algorithm presented in \S\ref{sec:algo:rto_opf}.
We suppose that the loads $(\bm{p}_{t}^g, \bm{q}_g^t)$
are varying along the time, and are updated at every minute. To model the evolution of the loads, we use the time-series
provided by~\cite{kim2020real}: as illustrated in Figure~\ref{fig:demand}, all loads drop suddenly by 20\% at time $t=2$, making it difficult to track the optimal solution. For all time $t$, the real-time algorithm should
update the tracking control $\bm{w}$ in a time-span $\Delta t$, with $\Delta t \ll 1\text{mn}$.
\begin{figure}[!ht]
  \centering
  \includegraphics[width=0.5\textwidth]{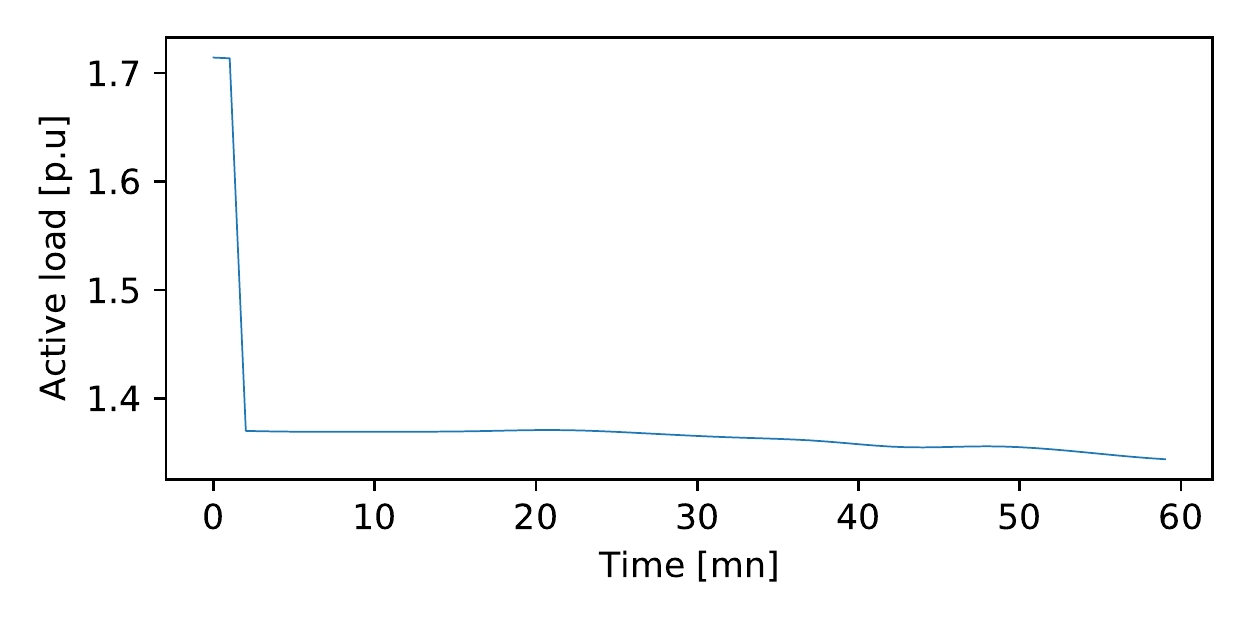}
  \caption{
    Evolution of the active load at bus $1$ for case1354pegase.
  }
  \label{fig:demand}
\end{figure}

At time $t=0$, the initial primal-dual solution $(\bm{w}_0, \bm{y}_0)$ is computed
using the static OPF algorithm.
For all time $t$, we use our GPU implementation to update the tracking point.
This operation involves two expensive operations: (i) computing the reduced
Hessian for the new load conditions $(\bm{p}_t^d, \bm{q}_t^d)$ (ii) solving the bounded QP problem~\eqref{eq:qp_rto}.
We detail in Table~\ref{tab:results:time_rto_opf} the time spent inside these two operations:
{\tt LinAlg} is the time spent in the Cholesky solver (factorization and triangular solve).
{\tt Eval } is the time spent in the QP callbacks (involving only BLAS operations).
We observe that it takes less than one second to update the tracking point
for {\tt case1354pegase} and {\tt case9241pegase}, slightly longer for {\tt case9241pegase}.

\begin{table}[!ht]
  \resizebox{.5\textwidth}{!}{
    \begin{tabular}{c|c|cccc|c}
      & & \multicolumn{4}{c}{QP resolution} & \\
      \hline
      {\bf Case} &  Hess (s) & \#it & LinAlg. (s) & Eval. (s) & Tot (s) & $\Delta t$ (s)\\
      \hline
      PEGASE1354 & 0.06 & 12 & 0.005 & 0.014 & 0.08 & 0.14 \\
      PEGASE2869 & 0.16 & 12 & 0.096 & 0.029 & 0.16 & 0.32 \\
      PEGASE9241 & 1.6 & 14 & 0.187 & 0.200 & 3.0 & 3.6
    \end{tabular}
  }
  \caption{
    Time to update the tracking point (in seconds).
  }
  \label{tab:results:time_rto_opf}
\end{table}

We now assess the effectiveness of the tracking algorithm. In Figure~\ref{fig:rto_algo}, we control {\tt case1354pegase}
every minute, during one hour. At time $t=2$, the perturbation happens and the loads drop by 20\%. The real-time algorithm is able
to recover a suboptimal solution in 15 minutes (at $t>17$, the relative gap with Ipopt's objective is less than $2\times10^{-4}$, with a primal infeasibility for the operational constraints closes to $1\times10^{-2}$). The third plot, displaying the absolute difference between $\bm{p}_t^g$ and the optimal point
$\bm{p}_t^{g,\star}$, shows that the median deviation in active power generations is less than $1 \times 10^{2}$.

\begin{figure}[!ht]
  \centering
  \includegraphics[width=0.5\textwidth]{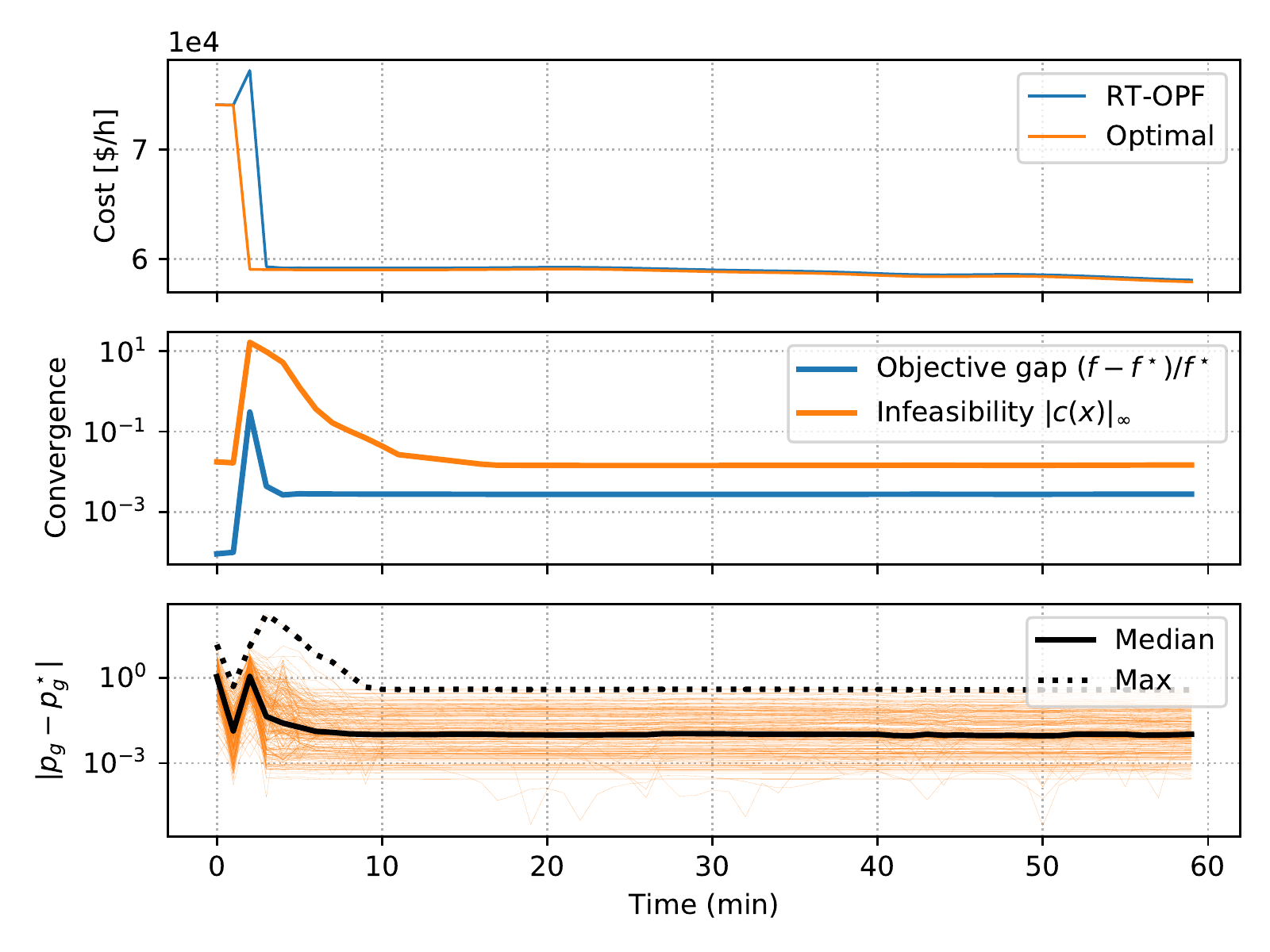}
  \caption{
    Tracking the AC-OPF solution of {\tt case1354pegase} with the RT-OPF
    algorithm. The first and the second plots display respectively
    (i) the operating costs computed respectively with the real-time OPF and with Ipopt.
    (ii) the convergence of the real-time OPF.
    The third plot displays for each generator the absolute difference between Ipopt solution and real-time OPF's setpoint.
  }
  \label{fig:rto_algo}
\end{figure}

\section{Conclusion}
We have devised a feasible Augmented Lagrangian algorithm, whose
iterates satisfy by design the physical equations of the power network.
A GPU implementation of the reduced space algorithm has proven to be critical
for its tractability. On the one hand, we have shown that
the algorithm is effective at solving large-scale static OPF problem and, to the best of
our knowledge, is the first reduced space algorithm able to solve {\tt case9241pegase}
with second-order information. On the other hand, the reduced space algorithm is able to track a suboptimal solution in a real-time OPF setting, and can adapt quickly to large variations in the loads.

Future work will optimize the algorithm by intertwining more
closely the IPM algorithm with the Augmented Lagrangian algorithm with the expectation
of more accurate solutions, in less iterations.


\bibliographystyle{IEEEtran}
\bibliography{biblio.bib}
%

\vfill
\begin{flushright}
{\footnotesize
  \framebox{\parbox{0.5\textwidth}{
The submitted manuscript has been created by UChicago Argonne, LLC,
Operator of Argonne National Laboratory (``Argonne"). Argonne, a
U.S. Department of Energy Office of Science laboratory, is operated
under Contract No. DE-AC02-06CH11357. The U.S. Government retains for
itself, and others acting on its behalf, a paid-up nonexclusive,
irrevocable worldwide license in said article to reproduce, prepare
derivative works, distribute copies to the public, and perform
publicly and display publicly, by or on behalf of the Government.
The Department of
Energy will provide public access to these results of federally sponsored research in accordance
with the DOE Public Access Plan. http://energy.gov/downloads/doe-public-access-plan. }}
\normalsize
}
\end{flushright}

\end{document}